\newtheorem{theorem}{Theorem}[section]
\newtheorem{corollary}{Corollary}[section]
\newtheorem{definition}{Definition}[section]
\newtheorem{lemma}{Lemma}[section]
\newtheorem{remark}{Remark}[section]
\numberwithin{equation}{section}
\def\keywords{\vspace{.5em}
{\textit{Keywords}:\,\relax%
}}
\newcommand{\R}{\mathbb{R}}
\newcommand{\E}{\mathbb{E}}
\newcommand{\N}{\mathbb{N}}
\newcommand{\Prob}{\mathbb{P}}
\newcommand{\1}{\mathbf{1}}
\newcommand{\CF}{\mathcal{F}}
\begin{document}

\title{Convergence in Models with Bounded Expected Relative Hazard Rates%
\thanks{We thank J.~\foreignlanguage{esperanto}{Hedlund}, S.~\foreignlanguage{british}{Lakshmivarahan}
and M.A.L.~Thathachar for helpful correspondence on the subject matter
of this paper, and two referees and the Associate Editor for insightful and constructive comments. Oyarzun acknowledges financial support of the Ministerio
de Ciencia y Tecnologia, FEDER funds under project SEJ2007-62656,
and of the Instituto Valenciano de Investigaciones. 
Ruf acknowledges financial support of the  Visitors Program of the School of Economics at the University of Queensland and  of the Oxford-Man Institute of Quantitative Finance at the University of Oxford, where a major part of this work was completed.
}}

\author{Carlos Oyarzun%
\thanks{School of Economics, University of Queensland. E-Mail: c.oyarzun@uq.edu.au %
}
  \and Johannes Ruf%
\thanks{Department of Mathematics, University College London. E-Mail: j.ruf@ucl.ac.uk%
} 
}
\maketitle
\begin{abstract}
We provide a general framework to study stochastic sequences related
to  individual learning in economics, learning automata in computer sciences,
social learning in marketing, and other applications. More precisely, we study
the asymptotic properties of a class of stochastic sequences that
take values in $[0,1]$ and satisfy a property called ``bounded
expected relative hazard rates.'' Sequences that satisfy this property and feature ``small step-size" or ``shrinking step-size"   converge to $1$ with high probability or almost surely, respectively. These convergence results yield conditions for the learning models in \citet{ErevRoth1998}, \citet{Schlag1998}, and \citet{Borgersetal2004}
 to choose  expected payoff maximizing actions with probability one in the long run.
\end{abstract}

\keywords{Hazard rate,  individual learning, social learning,  two-armed bandit algorithm, dynamic system, stochastic approximation, submartingale, convergence.}

\section{Introduction}

Stochastic sequences arising in the analysis of several models in economics often exhibit expected hazard rates that are proportional to
the sequence's current value.  For instance, models of technology adoption often
satisfy that the change in the fraction of a population that adopts
a new technology is proportional to the product of the current fraction
of adopters and the current fraction of non-adopters (see, e.g., \Citet{Young2009}).
This follows from the assumption that diffusion of technology requires non-adopters to observe
adopters in order to learn about the new technology.  
A similar reasoning applies to models in other disciplines, such as Bass' celebrated model  of new product growth (see, e.g., \Citet{Bass1969}, \citet{JacksonYariv2011}) and selection models in biological evolution (see, e.g., \Citet{Nowak2006}).   As we discuss below,  models of individual and social learning provide another class of examples for stochastic sequences with expected hazard rates that are proportional to the sequences' current value.  In these models, the sequences represent the probability of choosing optimal actions.

The analysis of such models   usually concerns the question whether a new technology or a product gets fully adopted, a certain type takes over in a biological selection process, or an optimal action is played almost surely in the long run.
Towards this end, this paper provides general conditions on expected hazard rates of a bounded stochastic sequence that guarantee the convergence to the upper bound.  Here,  the sequence is interpreted as a fraction of a certain type or the probability of playing an optimal action at any point in time. This paper thus provides conditions that guarantee that, in the long run, a certain type takes over the whole group of types or  only optimal actions are chosen, as illustrated in the applications discussed below.

It turns out that constraints on the \emph{relative hazard rates} of a stochastic sequence, i.e., the proportions of the hazard rates to the values of the sequence,\footnote{Formally, if the values of the sequence are denoted
by $\{P_{t}\}_{t \in \N_0}$, then the corresponding relative hazard rates are defined as $(P_{t+1}-P_{t})/((1-P_{t})P_{t})$ for all $t \in \N_0$.}  provide helpful conditions for the convergence to the upper bound.  In contrast to the deterministic case, in a stochastic framework, lower bounds for the relative hazard rates are not sufficient for almost sure convergence. For example, in the case of technology adoption, full adoption might fail as the new technology may be completely abandoned at some point in time by chance, or adoption rates may drop too fast.  
The analysis below reveals that if the underlying submartingale moves in small or shrinking steps, convergence to the upper bound holds, nevertheless. Thus, in the long run, new technologies are used or optimal actions chosen if adoption or learning occurs in small or shrinking steps.

The first main result of this paper, Theorem~\ref{lowerbound}, analyzes the asymptotical
properties of a sequence that changes with \emph{small step-size} and  satisfies weak bounds on its relative hazard rates.
Theorem~\ref{lowerbound} asserts that the probability of  \emph{convergence to optimality},
i.e., the event that the stochastic sequence converges to the upper bound, is arbitrarily
high for sequences with sufficiently small step-size. This result  allows us
to obtain novel convergence results in different contexts, including,
for instance, the models of individual and social learning that we discuss below. A limitation of Theorem~\ref{lowerbound}  is that the question of how small the step-size needs to be in order to achieve any given  probability of convergence to $1$   is usually directly related to the probability measure of the underlying probability space. In applications, however, this probability measure  is assumed to be unknown. This issue is addressed
by Theorem~\ref{T_RE_Theory} and Corollary~\ref{C as}, which provide
sufficient conditions for achieving convergence to optimality almost surely under 
an extra condition that may be interpreted as requiring an arbitrary
\emph{shrinking step-size} over time. 

These results can be applied to the analysis of several models of boundedly rational learning (see, e.g., \Citet{ErevRoth1998}, \Citet{Schlag1998}, \Citet{Borgersetal2004}). In models of individual learning,  in every period individuals choose one action out of a finite set and observe a payoff realization yielded by the action they choose (sometimes along with forgone payoffs). In models of social learning, individuals also observe the payoffs from the actions chosen by a  sample of other individuals. Learning is assumed to be ``adaptive,'' i.e., in every period, individuals make their choice  according to a probability distribution over actions and this distribution is revised as new payoff observations arrive. As discussed in Section~\ref{SS applications}, our results can be used to provide conditions for learning to yield convergence to choose expected-payoff maximizing actions, either with high probability or almost surely. All details are provided in the online appendix (\citet{OyarzunRuf2014a}).  



Small and shrinking  step-size appear often in applications.  Small step-size  has been used in both theoretical and experimental work in economics (see, e.g., \Citet{BorgersSarin1997} and \Citet{VanHuyck_etal2007}, respectively). Shrinking step-size appears endogenously in the Roth-Erev model (see, e.g., \citet{ErevRoth1998}).\footnote{Polya-urn-based schemes, akin to the Roth-Erev learning model (and  hence yielding decreasing step-size), are useful to study different allocation problems. For instance, \Citet{Durham_etal1998} and \Citet{LaruellePages2013} apply these models to the study of patient allocation in clinical trials.}    Researchers using  the \Citet{Cross1973} model in applications often assume shrinking step-size   (see, e.g., \Citet{SarinVahid2004}
), even though the benchmark version of this model has a fixed step-size. 
The condition of 
shrinking step-size captures the ``power law of practice''
in learning (see, e.g., \citet{ErevRoth1998} and the references therein): initial periods typically  exhibit a substantial response of behavior to experience and are followed by gradually decreasing responses, such as those implied by shrinking step-size. 

The question then arises when and why the ``power law of practice'' is relevant. Psychologists have  long studied this problem. For instance, \Citet{Bills1934} and \Citet{NewellRosenbloom1981} study the  decrease over time of motivation,  psychophysical performance, or cognitive gains from experience, as possible explanations of the ``power law of practice.'' These explanations  have appeal in the analysis of  economic applications, as well. In particular, motivation,  psychophysical performance, and cognitive gains play an important role in the analysis of data in experimental economics, where subjects tire and lose concentration. More importantly,  in real-world economic problems, the ``power law of practice'' seems to hold for similar reasons. 
For instance,  \Citet{Choi_etal2009} analyze reinforcement learning and saving behavior, and provide evidence supporting the ``power law of practice'' hypothesis: younger investors are more responsive to their personal return realizations than older investors in terms of their 401(k) saving rates. We believe the ``power law of practice'' plays a role in  individual and social learning in economics.

\bigskip{}

\textbf{Related literature.} \Citet{Norman1968b} formally analyzes
a two-armed bandit algorithm to study the asymptotic properties of reinforcement learning models considered by experimental psychologists (see, e.g., \Citet{weinstock1965probability}) who study learning when success or failure are the only  possible outcomes. In this pioneering work, he shows  that certain  learning models   converge with high probability to choose the action that is more likely to yield 
success, provided that changes in the probability of choosing
each action are small. Computer scientists (see, e.g., \Citet{ShapiroNarendra1969}, \citet{NarendraThathachar1974}, \Citet{LT1976}, and \Citet{Torkestani}), provide similar results in the context of learning automata. \Citet{OyarzunSarin2006} adapt these techniques   to prove
convergence of a class of learning models to risk averse choice. The settings in these papers
are more restrictive than in this work, and their convergence results are implied by Theorem~\ref{lowerbound} below.
None of these papers has a counterpart to the almost-sure
convergence results in Theorem~\ref{T_RE_Theory} and Corollary~\ref{C as} below,
as the models they analyze fail to satisfy our conditions on shrinking step-size over time.

The paper closest to our   analysis is that of \citet{Lamberton_etal2004}, who thoroughly analyze  the asymptotical properties of the two-armed bandit algorithm. This analysis is of particular interest because the algorithm may have a positive probability of converging to a non-optimal state, i.e., a ``trap,'' despite of the probability of choosing an optimal action being a submartingale.
 \citet{Lamberton_etal2004} take an  approach similar to ours based on shrinking step-size to provide conditions that yield convergence to optimality almost surely. Their analysis is
tailored to the specific characteristics of the two-armed bandit algorithm, whereas this paper's framework allows us to apply its results in more general settings such as the  models in economics that we study in the applications.

\section{Convergence  for updating rules with small and shrinking step-size}  \label{S:conv}
\subsection{Framework}  \label{SS Framework}

In this subsection, we provide the analytical framework and
introduce the condition of bounded expected relative hazard rates.


In our applications to models of individual and social learning, the realization of the state of the world in each period determines the action chosen by each individual, the obtained and
forgone payoffs, and the information revealed to each individual. After observing this information, individuals adjust their behavior, i.e., the probability of choosing each action according to their behavioral rule. We now provide a formal model that encompass these models. First, we introduce the probability space and the states of the world. Then we introduce the \emph{updating rule}, which is a function that maps the observable part of the state of the world  to the current value of a stochastic process. This process represents  performance, i.e., the probability of choosing optimal actions.  The applications discussed  in Section~\ref{SS applications} illustrate that (a slight generalization of) this setup is  broad enough to accommodate an array of models of learning.   

\bigskip

\textbf{(a) Probability space.} The possible states of the world are
represented by the measurable product space $(\Omega,\mathcal{F})=(\prod_{t=1}^{\infty}\Omega_{t},\otimes_{t=1}^{\infty}\mathcal{F}_{t})$,
where $\Omega_{t}$ stands for the set of states that may occur at
time $t\in\mathbb{N}$ and is equipped with a sigma algebra
$\mathcal{F}{}_{t}$, describing the set of events. Furthermore, let $\Omega_{[0,t]}:=\prod_{\tau=1}^{t}\Omega_{\tau}$ denote the set of all histories up to time $t$ and let $\Omega_{[0,0]}$ be an arbitrary singleton. Analogously, let 
$\mathcal{F}_{[0,t]}:=\otimes_{\tau=1}^{t}\mathcal{F}_{\tau}$ denote
the set of all events that may occur up to time $t\in\mathbb{N}$
and set $\mathcal{F}_{[0,0]}:=\{\emptyset,\Omega\}$.  Let $\mathbb{P}$ be a probability measure on $\left(\Omega,\mathcal{F}\right)$, and $\mathbb{P}_{t}$ its conditional version given $\mathcal{F}_{[0,t]}$, along with its conditional expectation $\mathbb{E}_{t}[\cdot]$.

\bigskip{}
\textbf{(b) Updating rule.} An updating
rule is  a sequence $\Pi=\left\{ \Pi_{t}\right\} _{t\in\mathbb{N}}$
of functions $\Pi_{t}:\Omega\times [0,1]\rightarrow[0,1]$ such that $\Pi_t(\cdot,p)$ is  $\mathcal{F}_{[0,t]}$--measurable for all $t\in\mathbb{N}$ and $p\in[0,1]$.
We shall usually omit the first argument of $\Pi_{t}$, for sake of notation.  For a given $P_0 \in [0,1]$,  the sequence $P = \{P_t\}_{t \in \mathbb{N}_0}$, defined via the iteration $P_{t} = \Pi_{t}(P_{t-1})$ for all $t \in \mathbb{N}$, is called the \emph{performance measure} (corresponding to the initial value $P_0$ and the updating rule $\Pi$).
\bigskip

The performance measure changes in a probabilistic way over time,
according to the information that becomes available and to the updating rule. Since the updating rule is allowed to depend on all information revealed up to that period, the performance measure may depend on the whole sequence of realized states up to that date.

In order to get an intuitive idea of the class of sequences studied in this paper, let us recall the interpretation borrowed from the literature on technology adoption mentioned in the introduction.
Let $P_{t}\in[0,1]$ denote the fraction of a continuum population that has adopted a new
technology by time $t\in\mathbb{N}_{0}$.
The change in the fraction of the
population that adopts the new technology is then assumed to be proportional to the rate at which a non-adopter observes an adopter. More precisely, if each individual  observes only one other individual, uniformly picked from the population, then the
fraction of time $t$ non-adopters who observe time
$t$ adopters is $P_{t}\left(1-P_{t}\right)$. Only a fraction of these non-adopters
are assumed to adopt the new technology at time $t+1$. In the spirit of
survival theory,  \Citet{Young2009} calls the  ratio $\left(P_{t+1}-P_{t}\right)/\left(P_{t}\left(1-P_{t}\right)\right)$
the \emph{relative hazard rate} of $P$ at time ${t\in\N_{0}}$. 

 The relative hazard rates of the models studied below
are allowed to be stochastic. This paper provides conditions on
the expected values of the relative hazard rates that guarantee that $P$ converges
to $1$ (with a high probability or almost surely).  Towards this end, we next introduce an important property for expected relative hazard rates:

\begin{definition} \label{bhr} An updating rule $\Pi$
satisfies the \emph{weakly bounded expected relative hazard rates
property (WBERHR)} with lower bound sequence $\delta:=\{\delta_{t}\}_{t\in\mathbb{N}_{0}}$,
where $\delta_{t}\geq0$ is $\mathcal{F}_{[0,t]}$--measurable, if
\begin{align}
\mathbb{E}_{t}[\Pi_{t+1}\left(p\right)]-p\geq\delta_{t}\cdot p \left(1-p\right)\label{so1}
\end{align}
 for all $p\in[0,1]$
and $t\in\mathbb{N}_{0}$. The updating rule $\Pi$
satisfies the \emph{bounded expected relative hazard rates property
(BERHR)} if it satisfies WBERHR with lower bound sequence $\delta:=\{\delta_{t}\}_{t\in\mathbb{N}_{0}}$
such that $\inf_{t\in\N_{0}}\{\delta_{t}\}>0$.\footnote{We emphasize that the BERHR property does not require that $\inf_{t\in\N_{0}}\{\delta_{t}\}$
is uniformly (in $\omega\in\Omega$) bounded away from zero.}
 \end{definition}

 The nature of phenomena that yield sequences satisfying BERHR or WBERHR is diverse. These  conditions often hold in learning models, such as those discussed  in Section~\ref{SS applications}. They also arise in   biological evolution models (see, e.g., \Citet{Nowak2006}), where the variable of interest may be the relative size of the population of a certain type of cells with respect to the whole population.\footnote{Consider the populations of two types of   cells, whose sizes at time $t\in\mathbb{N}_0$ are $X_t$ and $Y_t$ and whose exogenously given growth rates are  $a\geq-1$ and $b\geq-1$. Thus $X_{t+1}-X_t=aX_t$ and $Y_{t+1}-Y_t=bY_t$.  If $a>b$, then the fraction of the first type of cells in the population $P_t:=X_t/(X_t+Y_t)$ satisfies BERHR: $(P_{t+1}-P_t)/(P_t(1-P_t))\geq (a-b)/\left(1+a\right)$. In this case, since $P_t$ is deterministic, standard arguments yield that $\lim_{t\uparrow\infty}P_t=1$.}


The two-armed bandit algorithm in \citet{Lamberton_etal2004} satisfies WBERHR. The analysis in this paper, however, requires a more general setup than theirs in order to accommodate the applications  below. For instance, if the two-armed bandit algorithm is modified to allow that the outcomes of both arms are observed (while perhaps some  past outcomes are forgotten), then the results in \citet{Lamberton_etal2004} do not directly apply. In contrast, in the example of learning with full information that we analyze in Section~\ref{SS:individual} of the online appendix, we illustrate how to use the results provided here in that setup.

\bigskip
\textbf{(c) Convergence to optimality.} 
If the updating rule $\Pi$
satisfies BERHR or WBERHR, then the corresponding performance measure $P$ is a bounded submartingale and hence, there exists (almost surely) the random variable $P_{\infty} = \lim_{t\uparrow\infty}P_{t}$.
We call the event $\{P_{\infty}=1\}$ convergence to  optimality. The notion of almost sure  convergence to optimality also appears as ``infallibility'' in the literature (see, e.g., \citet{Lamberton_etal2004}).

\bigskip

We conclude this subsection with a relatively standard observation (see, e.g., \citet{Norman1968b}):
\begin{lemma} \label{support}If the updating rule $\Pi$
satisfies WBERHR with lower bound sequence $\delta=\{\delta_{t}\}_{t\in\mathbb{N}_{0}}$
and if $\sum_{t=0}^{\infty}\delta_{t}=\infty$, then $P_{\infty}\in\{0,1\}$.
\end{lemma}

\begin{proof} Assume that $\{P_{t}\}_{t \in \N_0}$ does not almost surely converge
to either $0$ or $1$. Then, there exists an $\varepsilon>0$ such that
$\Prob(\lim_{t\uparrow\infty}P_{t}(1-P_{t})>2\varepsilon)>2\varepsilon$.
Thus, there exists a $t_{0}\in\N$ such that the event $B:=\{P_{t}(1-P_{t})>\varepsilon\text{ for all }t\geq t_{0}\}$
satisfies $\Prob(B)>\varepsilon$.

By the hypothesis, 
\[
\mathbb{E}_{t}[P_{t+1}]-P_{t}\geq\delta_{t}P_{t}(1-P_{t})
\]
 for all $t\in\mathbb{N}_{0}$; thus, 
\[
1\geq\mathbb{E}[P_{t}]=P_{0}+\underset{\tau=0}{\overset{t-1}{\sum}}\mathbb{E}\left[P_{\tau+1}-P_{\tau}\right]\geq\overset{t-1}{\underset{\tau=0}{\sum}}\mathbb{E}\left[\delta_{\tau}P_{\tau}(1-P_{\tau})\right]\geq\underset{\tau=t_{0}}{\overset{t-1}{\sum}}\mathbb{E}\left[\1_{B}\delta_{\tau}P_{\tau}(1-P_{\tau})\right] \uparrow \infty,
\]
 as $t$ tends to infinity, leading to a contradiction. \end{proof}

\subsection{Example for the lack of almost sure convergence to optimality}

The following example, adapted from \Citet{ViswanathanNarendra1972}, illustrates
that even updating rules that satisfy a strong version of BERHR, so that the lower bound sequence
is uniformly (in $\omega\in\Omega$) bounded away from zero, and that
have a performance sequence $P$ that never gets absorbed, i.e., $P_{t}\in(0,1)$ for all $t\in\N_{0}$, may not achieve convergence to  optimality almost surely. 
Consider a two-armed bandit algorithm that, at each time $t$,
chooses one out of two arms and observes the realization of a failure
or success. Arm~1 and arm~2 succeed with probability $\mu_{1}$ and
$\mu_{2}$, respectively, where $0<\mu_1<\mu_2<1$. The probability of choosing arm~1 at time $t$ is $1-P_t$ and
the probability of choosing arm~2 is $P_t$. If at time $t$ the
observed realization is a failure, then  $P_{t+1}=P_t$. Observed successes, however, increase the probability of choosing the
same arm in the next period: if  arm~2 is chosen at time $t$ and a success is observed,
then  $P_{t+1}-P_t=(1-P_t)(1-\beta)$; and, if arm~1 is chosen and a success is observed,
then  $P_{t+1}-P_t=- P_t(1-\beta)$, for some exogenously given constant $\beta\in[0,1)$.\footnote{Equivalently, if arm~1 is chosen and a success is observed,
then  $P_{t+1}= \beta P_t$.}
Although the underlying updating rule verifies BERHR with
constant lower bound $(1-\beta)(\mu_{2}-\mu_{1})>0$, it is argued below that, with strictly positive probability, arm~1
is chosen at all times. Therefore, the event $\{P_{\infty}=0\}$
has positive probability. 

To see how convergence to choose arm~2 may fail, observe that if $\beta = 0$, then $P_1 = 0$ with probability $(1-P_0) \mu_1$. Thus, in this case, $\mathbb{P}(P_{\infty}=0) \geq (1-P_0) \mu_1$.  If $\beta \in (0,1)$, partition the set of time periods  in subsets or \emph{blocks} of consecutive time periods, with cardinalities $1,2,3,...$, i.e., $\{1\}, \{2,3\},\{4,5,6\}, ....$ Now, fix $j>1$ and consider the event in which  arm~$1$  is chosen at all
times until the last time of the  $(j-1)^{th}$ block, with at least
one success in each block.  Conditional on that event, the  probability of choosing arm~$1$ at all times in the
block of length $j$ is at least $(1-\beta^{j-1}P_0)^{j}$. Furthermore, the probability of obtaining
at least one success in the $j^{th}$ block, given that arm~$1$
is chosen  in all time periods in that block, is $1-(1-\mu_{1})^{j}$. Therefore, the probability that arm~$1$ is chosen at  all times until the last time period of the $N^{th}$ block,  and that at least one success occurs in each block is at least 
\begin{equation}
\prod_{j=1}^{N}(1-\beta^{j-1}P_{0})^{j}\cdot\prod_{j=1}^{N}(1-(1-\mu_{1})^{j}).\label{eq:vn_lowerbound}
\end{equation}

We now argue that the limit of the expression in \eqref{eq:vn_lowerbound},
as $N$ tends to $\infty$, is strictly positive, which, directly
yields $\Prob(P_{\infty}=0)>0$. Since $\sum_{j=1}^{\infty}\left(1-\mu_{1}\right)^{j}<\infty$,
we have  $\prod_{j=1}^{\infty}(1-(1-\mu_{1})^{j})>0$.%
\footnote{We use that, for $a_{j}<1$, the product $\prod_{j=1}^{\infty}(1-a_{j})$ converges
to a strictly positive number if the sum  $\sum_{j=1}^{\infty}a_{j}$ converges
absolutely, which follows from taking logarithms and the limit comparison
test.%
} Finally, the inequalities $(1-\beta^{j-1}P_0)^{j}\geq1-j\beta^{j-1}P_0>0$
for all large enough $j$ imply that the first product in \eqref{eq:vn_lowerbound}
converges to a strictly positive number since $\sum_{j=1}^{\infty}j\beta^{j-1}<\infty$
by the ratio test.

Intuitively, when an action is chosen initially and sufficient
successes are observed, there is a positive probability that this
action is always chosen.
However, 
Corollary~\ref{C as} below reveals that this event would not occur if the updating rule had decreasing step-size. In this example, such an updating rule is obtained by replacing $(1-\beta)$ with $(1-\beta)/(t+2)$  when updating $P_t$ to $P_{t+1}$ for all $t \in \N_0$.
Theorem~\ref{lowerbound} yields an analog statement.

As this  example illustrates, the BERHR property does not guarantee convergence to optimality almost surely. This paper is mainly concerned with strengthening this property to obtain convergence to  optimality  with high probability or almost surely.

\subsection{Convergence results}
\label{SS convergence}

In this subsection, we analyze the asymptotic properties of the performance
measure of updating rules that satisfy either BERHR or WBERHR. In particular,
we provide sufficient conditions for  updating rules that satisfy these properties with shrinking step-size,
in a sense that we make precise below, to yield convergence to optimality almost
surely. 


Towards this end, we fix an updating rule $\Pi$ and  a sequence $\theta:=\{\theta_{t}\}_{t\in\mathbb{N}_{0}}$
such that $\theta_{t}\in(0,1]$ is $\mathcal{F}_{[0,t]}$--measurable, which we call a \emph{compressing sequence}. 
We now consider the updating rule  $\Pi^{\theta}=\left\{ \Pi_{t}^{\theta}\right\} _{t\in\mathbb{N}}$ given by the sequence of functions $\Pi_{t}^{\theta}:\Omega\times [0,1]\rightarrow[0,1]$
 that satisfy 
\begin{equation}
\Pi_{t}^{\theta}(\cdot,p):=p+\theta_{t-1}\left(\Pi_{t}\left(\cdot,p\right)-p\right)\label{eq:slowdef}
\end{equation}
for all $p\in[0,1]$ and $t\in\mathbb{N}.$ 

We say that
the updating rule $\Pi^{\theta}$ is a \emph{small step-size version} of $\Pi$. Let $P^{\theta}=\left\{ P_{t}^{\theta}\right\} _{t\in\mathbb{N}_{0}}$ be the corresponding \emph{small step-size version} of $P$;  to wit, $P_0^{\theta}=P_0\in[0,1]$ and
$P_t^{\theta}=\Pi_{t}^{\theta}(P_{t-1}^{\theta})$
for all $t\in\mathbb{N}$.  If $\Pi$ satisfies WBERHR  with lower bound sequence $\delta$, then $\Pi^{\theta}$
satisfies WBERHR with lower bound sequence $\{\theta_{t}\delta_{t}\}_{t\in\mathbb{N}_{0}}$ since
\begin{align} \label{eq:slowberhr}
\mathbb{E}_{t}\left[\Pi_{t+1}^{\theta}\left(p\right)\right]-p=\mathbb{\theta}_{t}\mathbb{E}_{t}[\Pi_{t+1}\left(p\right)-p]\geq\theta_{t}\delta_{t}p(1-p)
\end{align}
for all  $p\in[0,1]$
and $t\in\N_{0}$. Furthermore, if $\Pi$
satisfies BERHR and $\inf_{t\in\mathbb{N}_{0}}\{\theta_{t}\}>0$,
$\Pi^{\theta}$ satisfies BERHR as well.
Hence, as before, we can define $P_{\infty}^{\theta}:=\lim_{t\uparrow\infty}P_{t}^{\theta}$
for $\Pi^{\theta}$.
Compared to the updating rule $\Pi$ with corresponding performance measure $P$, the updating rule $\Pi^\theta$ yields a performance measure $P^\theta$ that moves in the same direction as $P$, but  a smaller magnitude.

We are now ready to state the first result of this paper:
\begin{theorem} \label{lowerbound} Suppose the updating rule $\Pi$
satisfies WBERHR with lower bound sequence $\delta=\{\delta_{t}\}_{t\in\mathbb{N}_{0}}$,
$\sum_{t=0}^{\infty}\delta_{t}^{2}=\infty$, and $P_0>0$.
Then, for all $\varepsilon>0$, the sequence $\theta=\{\theta_{t}\}_{t\in\N_{0}}$
with $\theta_{t}:=(1\wedge\delta_{t})\cdot c\in(0,1)$, where $c$
is a constant depending only on $P_0$ and $\varepsilon$, satisfies
$\Prob(P_{\infty}^{\theta}=1)>1-\varepsilon$.   \end{theorem}

Theorem~\ref{lowerbound} considers  updating rules that satisfy WBERHR and whose relative hazard rates either vanish slowly or do not vanish. It asserts that for any arbitrary lower bound on the probability of convergence to optimality, there exists  a small step-size version of the underlying updating rule such that this bound holds.

The proof of Theorem~\ref{lowerbound} can be found in Appendix~\ref{A proofs general}.  It is based on the idea of applying an increasing, concave, continuous, and bijective function $\phi:[0,1]\rightarrow[0,1]$  to the submartingale $P$ such that
$\phi(P_{0})>1-\varepsilon$. The new sequence $\left\{ \phi(P_{t})\right\} _{t\in\N_{0}}$, in general, is not a submartingale. However, WBERHR  yields a positive lower bound on the expected
differences $P_{t+1}-P_{t}$ and $\phi$ is locally
approximately linear. Applying $\phi$  to the small step-size version $P^{\theta}$,
which, in each step, only varies in a small neighborhood, corresponds
to applying an almost linear function to a submartingale with a positive
lower bound on its expected change. Hence, $\left\{ \phi(P_{t}^{\theta})\right\} _{t\in\N_{0}}$
is a submartingale. Given that $P_{\infty}^{\theta}\in\{0,1\}$,
one then obtains the statement, that is, $\mathbb{P}(P_{\infty}^{\theta}=1)>1-\varepsilon$.
The main ideas of this discussion are contained in the proof of Lemma~\ref{L_exp_inequality}.\footnote{
In Lemma~\ref{L_exp_inequality}, however, instead of constructing a new submartingale $\left\{ \phi(P_{t})\right\} _{t\in\N_{0}}$,
we are constructing a supermartingale, using the
same ideas. This modified approach simplifies the arguments for
Theorem~\ref{lowerbound} and the assertions below.}${}^,$\footnote{Taking a different point of view, small step-size updating leads to
a higher probability of converging to $1$ due to the lack of additivity
of standard deviation. With small step-size, one step is replaced
by several  steps. While the expected values of these steps are additive,
the standard deviations add up only subadditively;
thus the standard deviation--to--expected value  ratio decreases; and hence, the probability of convergence to
$1$ increases.}

Sometimes, an updating rule can directly be interpreted as a small step-size version of some fictitious updating rule. 
 The following result uses this idea:

\begin{theorem} \label{T_RE_Theory} Consider an $\{\mathcal{F}_{[0,t]}\}_{t\in\N_0}$--adapted
stochastic process $P=\{P_{t}\}_{t\in\N_{0}}$, taking values in $[0,1]$, such that the following three conditions hold:
\begin{enumerate}
\item \emph{Non-summable Relative Hazard Rates:} The sequence $P$
satisfies 
\begin{align*}
\E_{t}[P_{t+1}]-P_{t}\geq\delta_{t}P_{t}(1-P_{t})   
\end{align*}
 for some $\CF_{[0,t]}$--measurable random variable $\delta_{t}>0$,
for all $t\in\N_{0}$, with $\sum_{t=0}^{\infty}\delta_{t}=\infty$. 
 
\item \emph{WBERHR Stretchable:}
There exist a random variable $\tilde{\delta}>0$
and a sequence $\theta=\{\theta_{t}\}_{t\in\N_{0}}$ of almost surely
non-increasing $\CF_{[0,t]}$--measurable random variables ${\theta}_{t}\in(0,1]$ such that
\begin{align}
 -P_{t}\leq\frac{1}{\theta_{t}}(P_{t+1}-P_{t})\leq 1-P_t \label{small step version}
\end{align}
 and $\delta_{t}/\theta_{t}>\tilde{\delta}$
for all $t\in\mathbb{N}_{0}$. 
\item \emph{Relatively Fast Shrinking:} The stopping time $\rho$,
defined as 
\begin{align}
\rho:=\min\left\{ t\in\mathbb{N}_{0}:P_{t}\geq y\theta_{t}\right\} \text{ with }\min\emptyset:=\infty,\label{E prop1 rho}
\end{align}
 is almost surely finite for all $y\in\R$. 
\end{enumerate}
Then, $\lim_{t\uparrow\infty}P_{t}=1$. \end{theorem}

The first two conditions of Theorem~\ref{T_RE_Theory}, Non-summable Relative Hazard Rates and WBERHR Stretchable, seem natural given our previous analysis since they provide an interpretation of $P$ as a small step-size version of the performance measure of an updating rule that satisfies WBERHR. In particular, WBERHR Stretchable requires that if at each time $t$, the sequence change were $(1/\theta_t)(P_{t+1}-P_t)$ instead of $(P_{t+1}-P_t)$, then the resulting value of $P_{t+1}$ would still lie in $[0,1]$; and such an artificial sequence would satisfy WBERHR with strictly positive expected relative hazard rate (bounded by $\delta_t/\theta_t$, i.e., the original bound times $1/\theta_t$). The third condition, Relatively Fast Shrinking, requires that the fictitious compressing sequence $\theta$ tends to zero faster than $P$. Section~\ref{SS:RE} of the online appendix illustrates that the Roth-Erev learning model, for example, satisfies all these
conditions.

The proof of Theorem~\ref{T_RE_Theory} is similar to the one of
Theorem~\ref{lowerbound} and can be found in Appendix~\ref{A proofs general} as well. Recalling the informal discussion of the proof of Theorem~\ref{lowerbound}
above, we now  compress, in each period, a fictitious sequence, which
allows us to increase the concavity (and thus the value of $\phi(P_{t})$)
in each step, without losing the submartingale property of the process
$\left\{ \phi(P_{t})\right\} _{t\in\N_{0}}$. At some point in time, this
submartingale is greater than $1-\varepsilon$, for any arbitrarily given  
$\varepsilon$. This event occurs in finite time  due to the assumption of Relatively Fast Shrinking. From this
point on, the proof  follows the one of Theorem~\ref{lowerbound}.

A limitation of Theorem~\ref{lowerbound} is that it does not provide sufficient conditions for convergence to optimality
almost surely and that, for any small step-size version of the updating rule, one cannot pin down the probability of this event, unless the probability measure $\mathbb{P}$ is known. In applications, however, $\Prob$ is typically  assumed to be unknown.
These issues are taken care of by Corollary~\ref{C as}, where
we consider a reciprocally linearly decreasing compressing sequence:

\begin{corollary} \label{C as}Suppose the updating rule $\Pi$
satisfies BERHR and $P_0>0$. Then
the compressing sequence $\theta=\left\{ \theta_{t}\right\} _{t\in\N_{0}}$,
defined by $\theta_{t}=1/(t+2)$, satisfies $\Prob(P_{\infty}^{\theta}=1)=1$.
\end{corollary}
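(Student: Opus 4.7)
The plan is to deduce the corollary from Theorem~\ref{T_RE_Theory} applied to the slow-version process $P^\theta$, with the reciprocally linear sequence $\theta_t=1/(t+2)$ playing both the role of the slowing sequence used to build $P^\theta$ from $P$ and the role of the sequence appearing in conditions~2 and~3 of the theorem. Throughout, write $\delta=\{\delta_t\}_{t\in\N_0}$ for the lower-bound sequence witnessing BERHR for $(\Pi,\mathfrak{A})$, so $\underline{\delta}:=\inf_{t\in\N_0}\delta_t>0$ almost surely (although, per the footnote to Definition~\ref{bhr}, $\underline{\delta}$ need not be uniformly bounded away from zero in $\omega$).

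Verification of conditions~1 and~2 of Theorem~\ref{T_RE_Theory} would be routine. By Lemma~\ref{slowberhr} the slow version $(\Pi^\theta,\mathfrak{A})$ inherits WBERHR with lower-bound sequence $\theta_t\delta_t=\delta_t/(t+2)$, so that $\E_t[P_{t+1}^\theta]-P_t^\theta\geq(\delta_t/(t+2))\,P_t^\theta(1-P_t^\theta)$; since $\delta_t/(t+2)\geq\underline{\delta}/(t+2)$ almost surely, the divergence of the harmonic series yields $\sum_t\delta_t/(t+2)=\infty$ almost surely, which is condition~1. For condition~2 I set $\tilde{\theta}_t:=\theta_t=1/(t+2)$ (deterministic, decreasing, valued in $(0,1]$) and $\tilde{\delta}:=\underline{\delta}/2>0$. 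Linearity of $\mathfrak{A}$ together with the slow-version definition~\eqref{eq:slowdef} gives
\begin{equation*}
P_t^\theta+\frac{P_{t+1}^\theta-P_t^\theta}{\tilde{\theta}_t}=\mathfrak{A}\bigl(\Pi_{t+1}(\cdot,\mathfrak{f}_t^\theta,\sigma_t^\theta)\bigr)\in[0,1],
\end{equation*}
while the strict inequality $\delta_t/(t+2)>\tilde{\delta}\tilde{\theta}_t$ holds pathwise because $\delta_t\geq\underline{\delta}>\underline{\delta}/2=\tilde{\delta}$.

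The hard part is condition~3, which demands that the first-passage time $\rho_y:=\min\{t\in\N_0:P_t^\theta\geq y/(t+2)\}$ be almost surely finite for every $y\in\R$, equivalently $\sup_t(t+2)P_t^\theta=\infty$ almost surely. For $y\leq 2P_0$ one has $\rho_y=0$ trivially, but for larger $y$ the decay of the threshold $y/(t+2)$ must be outpaced by $P_t^\theta$ on essentially every sample path. The natural starting point is Lemma~\ref{support} applied to $(\Pi^\theta,\mathfrak{A})$, which gives $P_\infty^\theta\in\{0,1\}$ almost surely (since $\sum_t\theta_t\delta_t=\infty$ a.s.); on $\{P_\infty^\theta=1\}$ condition~3 is immediate because $(t+2)P_t^\theta\to\infty$, so the remaining task is to show $\Prob(P_\infty^\theta=0)=0$. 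For this I would exploit the Doob decomposition $P^\theta=M+A$: the martingale $M$ has increments bounded by $2\theta_t$, hence $\E[\langle M\rangle_\infty]\leq 4\sum_t\theta_t^2<\infty$, so $M$ converges almost surely and therefore $A_\infty<\infty$ almost surely, which combined with the WBERHR drift bound yields $\sum_t P_t^\theta(1-P_t^\theta)/(t+2)<\infty$ almost surely. Pairing this with the convex Lyapunov submartingale $f(P_t^\theta):=-\log(1-P_t^\theta)$, which satisfies $\E_t[f(P_{t+1}^\theta)]\geq f(P_t^\theta)+\underline{\delta}P_t^\theta/(t+2)$ by Jensen together with the elementary bound $-\log(1-x)\geq x$, would produce a contradiction with any positive mass for the event $\{P_\infty^\theta=0\}$. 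Once condition~3 is secured, Theorem~\ref{T_RE_Theory} delivers $\lim_t P_t^\theta=1$ almost surely, which is the claim of the corollary.
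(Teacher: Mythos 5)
Your verification of conditions~1 and~2 of Theorem~\ref{T_RE_Theory} is correct and coincides with the paper's proof (Lemma~\ref{slowberhr} plus divergence of $\sum_t 1/(t+2)$ for condition~1; the identity $P_t^\theta+(P_{t+1}^\theta-P_t^\theta)/\theta_t=\mathfrak{A}(\Pi_{t+1}(\cdot,\mathfrak{f}_t^\theta,\sigma_t^\theta))\in[0,1]$ for condition~2). The gap is condition~3, which is where all the substance lies. Structurally, you reduce condition~3 to showing $\Prob(P_\infty^\theta=0)=0$; but since Lemma~\ref{support} already gives $P_\infty^\theta\in\{0,1\}$, that statement \emph{is} the corollary, so you need an argument that does not presuppose the conclusion (and if you had one, Theorem~\ref{T_RE_Theory} would be superfluous). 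More importantly, the direct argument you sketch does not close. The Doob-decomposition step correctly yields $\sum_t\theta_t\delta_t P_t^\theta(1-P_t^\theta)<\infty$ almost surely, but this is perfectly consistent with $P_t^\theta\to 0$: if $P_t^\theta$ decays like $1/t$ then $\sum_t P_t^\theta/(t+2)<\infty$, and indeed the only pointwise lower bound available (Lemma~\ref{L pmin}) is $P_t^\theta\geq P_0/(t+1)$, for which $\sum_t P_0/((t+1)(t+2))=P_0<\infty$. The Lyapunov step fares no better: $-\log(1-P_t^\theta)$ is an \emph{unbounded} submartingale, so the divergence of its expectation (driven by the paths on which $P_t^\theta\to1$) and its drift inequality are both compatible with positive mass on $\{P_\infty^\theta=0\}$; a submartingale that is unbounded above may converge to a finite limit on part of the space and diverge on the rest. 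The genuinely delicate point --- whether noise with increments of order $1/t$ can drag the process to $0$ against a drift of order $P_t^\theta/t$ --- is not addressed.

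The paper settles condition~3 with Lemma~\ref{L p_bound}, whose proof is a quantitative optional-stopping argument over the geometric time blocks $N_k=N^{2k}-1$: combining the submartingale property of $P^\theta$, the floor $P_t^\theta\geq P_0/(t+1)$ from Lemma~\ref{L pmin}, and the increment bound $P_{t+1}^\theta\leq P_t^\theta+\theta_t$, one shows $\Prob(\rho>N_{k+1})\leq c\,\Prob(\rho>N_k)$ for a constant $c<1$, whence $\rho<\infty$ almost surely for every $y$. Some estimate of this quantitative type is indispensable here; your sketch, as written, would not produce it, so the proposal has a genuine gap at its central step.
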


\begin{proof}  We check that the sequence $P^{\theta}$
of the statement satisfies the assumptions of Theorem~\ref{T_RE_Theory}. By \eqref{eq:slowberhr}  and the fact
that $\sum_{t=0}^{\infty}\theta_{t}=\infty$, we obtain
that $P^{\theta}$ satisfies the Non-summable Relative Hazard Rates property. The
WBERHR Stretchable property follows from the definition of $P^{\theta}$.
Finally, Lemma~\ref{L p_bound} in Appendix~\ref{A proofs general} yields the Relatively Fast Shrinking property
of $P^{\theta}$. \end{proof}

The reciprocally linearly decreasing compressing sequence $\theta$ in Corollary~\ref{C as} typically appears in stochastic approximation theory. Appendix~\ref{A proofs general} contains a discussion on the connections of this paper's results to related findings based on arguments from that literature.

\subsection{Extended framework}
\label{SS extended}

The stochastic processes studied in economics (and other sciences)  are often multivariate. For instance, in models of individual learning, these processes correspond to vectors of probabilities of choosing each action. In order to embed such models in this paper's framework, we introduce the \emph{configuration space} $\mathfrak{S}$, i.e., a convex subset of $\mathbb{R}^{D}$, where $D\in\mathbb{N}$. The elements of $\mathfrak{S}$ are called   
 \emph{configurations}.  In the setup of individual learning, $D$ is the number of actions, $\mathfrak{S}$ is the simplex of dimension $D-1$, and a configuration is a vector of probabilities of choosing each action.

Next, we map any configuration to a value in $[0,1]$, measuring its ``performance.''  We call this mapping the \emph{aggregator} $\mathfrak{A}:\mathfrak{S} \rightarrow [0,1]$ and assume that $\mathfrak{A}$ is a weighted sum of the components of the configuration. For instance, we map the vector of probabilities of choosing each action into the probability of choosing an optimal action.

With a slight misuse of notation, we redefine an updating rule as a sequence $\Pi=\left\{ \Pi_{t}\right\} _{t\in\mathbb{N}}$
of functions $\Pi_{t}:\Omega \times \mathfrak{S} \rightarrow \mathfrak{S} $ such that $\Pi_t(\cdot,\sigma)$ is  $\mathcal{F}_{[0,t]}$--measurable for all $t\in\mathbb{N}$ and $\sigma\in\mathfrak{S}$.
A pair $\left(\Pi,\mathfrak{A}\right)$  of an updating rule and an aggregator is called a \emph{system}. The performance measure $P = \{P_t\}_{t \in \N_0}$ is now iteratively defined as $P_t=\mathfrak{A}\left(\sigma_t\right)$, with $\sigma_{t+1}=\Pi_{t+1}(\sigma_t)$ for all $t\in\mathbb{N}_0$ and $\sigma_0\in\mathfrak{S}$ exogenously given.

The definitions of the properties WBERHR and BERHR generalize from updating rules to systems replacing
\eqref{bhr} of Definition~\ref{bhr} with \begin{align*}
\mathbb{E}_{t}[\mathfrak{A}(\Pi_{t+1}\left(\sigma\right))]-\mathfrak{A}\left(\sigma\right)\geq\delta_{t}\cdot \mathfrak{A}\left(\sigma\right) \left(1-\mathfrak{A}\left(\sigma\right)\right)
\end{align*}
 for all $\sigma\in\mathfrak{S}$,
and $t\in\mathbb{N}_{0}$. 
The definition of small step-size versions of updating rules is adapted to this generalization, formally replacing $p$ by $\sigma$ in \eqref{eq:slowdef}.  We say that
the system $(\Pi^{\theta},\mathfrak{A})$ is a \emph{small step-size version} of the system $(\Pi,\mathfrak{A})$ and redefine $P^{\theta}$ in an analogous way. 

With the corresponding adjustments, all  results of Section~\ref{SS convergence} generalize to this extended framework:
\begin{remark} \label{remark}
Theorem \ref{lowerbound}  and Corollary \ref{C as} hold, mutatis mutandis, replacing the updating rule $\Pi$ with the system $(\Pi,\mathfrak{A})$ and the initial performance measure $P_0$ with  $\mathfrak{A}(\sigma_0)$. Theorem~\ref{T_RE_Theory} holds as it is.
\end{remark} 
 
\section{Application to learning models}
\label{SS applications}

We provide several applications of our results in the online appendix. In a first application, we consider models of individual learning with partial information. That is, we consider an individual who every period chooses one action out of a finite set according to a probability distribution and observes a payoff realization of her choice. Upon observing this realization, she adjusts the probability of choosing each action according to a function mapping (potentially all) past realizations of her choices and obtained payoffs to the revised probability of choosing each action. In such a setup  \Citet{Borgersetal2004} identify conditions on these mappings such that, in every period, the conditional expected value of the change in the probability of choosing expected-payoff maximizing actions is positive. The conditions they identify yield learning models  that satisfy the BERHR property. Their analysis, however, focuses on the change in the  probability of choosing each action from one period to the next and hence, is silent about the asymptotic properties of these models. Instead, by explicitly extending their framework to an infinite horizon, we show that  such learning models converge to payoff maximizing actions: (i) with high probability,  when they exhibit small step-size (applying Theorem~\ref{lowerbound}) and (ii) almost surely,  when they exhibit linear shrinking step-size such as, for instance, one of the learning models considered in \citet{SarinVahid2004} (applying Corollary~\ref{C as}). We also provide a similar construction
in a setting where individuals receive full information, i.e., observe 
both obtained and forgone payoffs. For details, see Section~\ref{SS:individual} in the online appendix.

In a second application, we analyze  models of social learning. We now consider a population of individuals who, in every period, choose one action within a finite set. Individuals adjust the probability of choosing each action upon observing their own payoff and also the actions chosen and payoffs obtained by a random sample of some of the other individuals. Each individual's revised probability of choosing each action is a weighted average of its previous probability and an \emph{imitation component} that only places probability on the actions that the individual observed, as a function of the payoffs they yielded. The weight of the imitation component is called the \emph{imitation rate}. \Citet{Schlag1998} considers a version of this model such that only one other individual's action and payoff is sampled and the imitation rate is $1$, i.e., each individual chooses with positive probability only the action she chose or the action chosen by the individual she observed. He provides conditions on the imitation component that allow the average payoff of the population, in expected value, to increase in every  period.\footnote{These conditions require that the difference between the probability that an individual who chose action $a$ switches to  action $b$ and the probability that an individual makes the opposite switch is an  increasing linear function of the difference between the payoffs yielded by $b$ and $a$. } For finite populations, however, \Citet{Schlag1998}  finds that, with positive probability, all individuals  converge to choosing non-optimal actions. By instead assuming linearly decreasing imitation rates, we can apply Corollary~\ref{C as} to prove that the event in which the whole population converge to choose optimal actions occurs almost surely.\footnote{In \Citet{OyarzunRuf2009} we generalize the conditions in \Citet{Schlag1998}: if the function describing the net switching from $a$ to $b$ is strictly increasing  in the payoff of $b$ and strictly decreasing in the payoff of $a$ (and satisfies some symmetry condition),  then the fraction of the population who choose first-order stochastically dominant actions is strictly increasing in expectation and the convergence results of this paper can  be applied to that setup as well.} For details, see Section~\ref{social} in the online appendix.

In a third application, we consider the  Roth-Erev model of individual learning (see  \citet{ErevRoth1998}). In this model, in every period,  each action has an ``attraction''  corresponding to the accumulated sum of payoffs that this action has yielded when it has been chosen, and the probability of choosing each action is proportional to its attraction. This model has embedded linearly shrinking step-size, therefore  we can use Theorem~\ref{T_RE_Theory} to analyze its asymptotical properties.  \Citet{Beggs2005} and \citet{HopkinsPosch2005}  prove that this model converges to payoff maximization using  arguments based on stochastic approximation. In contrast, the proof we provide in   the online appendix builds on the properties of the expected relative hazard rates of this learning model. Hence, our results provide a different interpretation of the convergence property of this model.\footnote{\Citet{Beggs2005} and \citet{HopkinsPosch2005} assume that payoffs are bounded away from zero, which is not required in our proof.}  For details, see Section~\ref{SS:RE} in the online appendix.

\section{Discussion}

The  analysis of systems that satisfy WBERHR or BERHR can be the starting
point for the study of slightly more complex dynamics. There are many
other models in the literature with similar characteristics to those
considered here that do not satisfy these properties. One example
is the model of word-of-mouth social learning in \citet{EllisonFudenberg1995}.
In their model, individuals sample $n\in\mathbb{N}$ other individuals
out of a continuum population and choose the action that has the highest
average payoff in their observed sample. Aggregate shocks (on top
of individual specific shocks) of the payoffs yielded by the two available
actions allow for randomness despite of the population's cardinality.
For $n=1$, their model satisfies BERHR, and hence, their findings
are recovered by our results. In particular, the population may ``herd''
to the action with the lowest expected payoff with positive probability,
and this probability goes to zero when there is enough inertia (which
is equivalent to shrinking step-size in our analysis). For $n>2$, however,
their model does not satisfy WBERHR and thus our results tell us
nothing about the asymptotic properties of their model. Future research
could study related conditions on these systems that make it possible to analyze
the asymptotic properties of the models in a general framework encompassing
their findings for those cases.

Another possible extension is the study of properties of systems that
satisfy (W)BERHR in games. \citet{Beggs2005} proves that the Roth-Erev
learning model leads individuals to converge to play with zero-probability
actions eliminated by iterated deletion of dominated strategies. \citet{TarresVandekerkhove2012} show that the two-armed bandit algorithm converges to the arm that is optimal in average, even if its expected payoff is smaller in some periods. Their results suggest that our approach could be extended to analyze setups in which the set of optimal actions may not be the same in each single period, as it often is the case in learning in games.
This topic deserves further
attention in the future.

\citet{LambertonPages2008a, LambertonPages2008b} provide the rates of convergence for the two-armed bandit algorithm. Indeed, the trade-off between speed and the probability of achieving convergence to optimality
is of particular interest in the literature of learning automata
(see, e.g., \citet{NarendraThathachar}) and hence, worth of further
study in the setup of this paper.   


In our analysis of the properties of the dynamics of choices in social learning models, our sampling assumptions may seem restrictive in some setups. For instance, observability may rule out
network structures in which individuals may sample some other individuals in the network with zero probability (see, e.g., \citet{BalaGoyal1998}). It is intuitive, however, that the choices of  individuals who are not sampled may be observed, after a number of periods, provided that there is a path of individuals connecting the individual who chose an action and another who could choose that action later via imitation. Analyzing the dynamics of the performance  measure in such structures would require developing further the constructions provided above.  This topic is left for future research.

\appendix


\section{Proofs of the convergence results in Section~\ref{SS convergence}}   \label{A proofs general}
In this appendix, we provide the proofs of the statements in Section~\ref{SS convergence}.
Although the models we analyze resemble a typical setup of stochastic approximation theory in the spirit of \Citet{RobbinsMonro1951}, \Citet{KieferWolfowitz1952},  \Citet{KushnerClark1978}, and \citet{KushnerYin2003}, the proofs in this appendix do not rely on standard techniques developed in that literature.  For an excellent overview of that literature, we refer the reader to \citet{Benaim1999} and the references therein. \Citet{FudenbergKreps1993}, \Citet{HopkinsPosch2005}, and  \citet{BenaimFaure2012} provide examples where stochastic approximation techniques have been fruitfully applied to economic learning models. A similar approach to analyze bandit problems is developed by  
\citet{Lamberton_etal2004}, \citet{LambertonPages2008b}, and \citet{TarresVandekerkhove2012}.  In the setup of our paper, we found  that arguing from first principles and extending results in  \citet{Norman1968b} and \Citet{LT1976}  was tractable for the generality of our statements.

We start with  the core insight for the proofs of the statements in Section~\ref{SS convergence}. Here, we strongly rely on the positivity of the lower bound sequence $\delta$ in the definition of the (W)BERHR condition.  The following lemma is inspired by the ideas in \citet{Norman1968b} and \Citet{LT1976}:\begin{lemma} \label{L_exp_inequality} If the updating rule $\Pi$
satisfies \eqref{so1} for some 
$p\in[0,1]$, $t\in\mathbb{N}_{0}$, and $\CF_{[0,t]}$--measurable
$\delta_{t}\geq0$ then 
\begin{align*}
\mathbb{E}_{t}\left[e^{-\frac{\gamma}{\theta_{t}}\Pi_{t+1}^{\theta}(p)}\right]\leq e^{-\frac{\gamma}{\theta_{t}}p}
\end{align*}
 for all $\CF_{[0,t]}$--measurable $\gamma\in[0,1\wedge\delta_{t}]$
and $\CF_{[0,t]}$--measurable $\theta_{t}\in(0,1]$. \end{lemma}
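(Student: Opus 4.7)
The plan is to reduce the statement to the one-step exponential bound $\mathbb{E}_t[e^{-\gamma \Delta}] \le 1$, where $\Delta := \mathfrak{A}(\Pi_{t+1}(\mathfrak{f},\sigma)) - \mathfrak{A}(\sigma)$, and then prove this bound by a secant/convexity argument.

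First, I would exploit the linearity of $\mathfrak{A}$ together with the definition $\Pi_{t+1}^{\theta}(\cdot,\cdot,\sigma) = \sigma + \theta_t(\Pi_{t+1}(\cdot,\cdot,\sigma) - \sigma)$ to write $\mathfrak{A}(\Pi_{t+1}^{\theta}(\mathfrak{f},\sigma)) = A + \theta_t \Delta$, where $A := \mathfrak{A}(\sigma) \in [0,1]$ and $\Delta \in [-A,\,1-A]$ (the latter because $\mathfrak{A}$ takes values in $[0,1]$). Since the factor $e^{-\gamma A/\theta_t}$ is $\mathcal{F}_{[0,t]}$-measurable, pulling it out of $\mathbb{E}_t$ reduces the desired inequality to $\mathbb{E}_t[e^{-\gamma \Delta}] \le 1$.

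Next, I would dominate the convex function $x \mapsto e^{-\gamma x}$ on the interval $[-A,\,1-A]$ by its secant line,
\[
e^{-\gamma x} \le (1-A-x)\, e^{\gamma A} + (A+x)\, e^{-\gamma(1-A)},
\]
and take $\mathbb{E}_t$. Writing $m := \mathbb{E}_t[\Delta]$, the resulting upper bound is affine in $m$ with slope $e^{-\gamma(1-A)} - e^{\gamma A} \le 0$, so it is non-increasing in $m$. Because $\gamma \le \delta_t$, the hypothesis \eqref{so1} yields $m \ge \delta_t A(1-A) \ge \gamma A(1-A)$, so replacing $m$ by $\gamma A(1-A)$ only enlarges the bound, giving
\[
\mathbb{E}_t[e^{-\gamma \Delta}] \le (1-A)(1-\gamma A)\, e^{\gamma A} + A(1+\gamma(1-A))\, e^{-\gamma(1-A)}.
\]

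The last step is to apply the elementary inequality $1+z \le e^z$ with $z = -\gamma A$ and $z = \gamma(1-A)$: the first summand is bounded by $(1-A)\, e^{-\gamma A}\, e^{\gamma A} = 1-A$, and the second by $A\, e^{\gamma(1-A)}\, e^{-\gamma(1-A)} = A$, summing to $1$. The only delicate point is the replacement $\delta_t \rightsquigarrow \gamma$ before the final estimate: plugging the sharper bound $m \ge \delta_t A(1-A)$ directly and invoking $1 \pm z \le e^{\pm z}$ would fail once $\delta_t > \gamma$, because the two exponentials no longer cancel; weakening to $m \ge \gamma A(1-A)$ (which is precisely what $\gamma \le \delta_t$ allows) produces the matching exponents needed for the telescoping in the last step.
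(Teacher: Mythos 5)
Your proof is correct, and it follows the same skeleton as the paper's: both arguments first use linearity of $\mathfrak{A}$ and the definition of $\Pi^{\theta}$ to cancel the $\theta_t$'s and reduce the claim to the unslowed bound $\mathbb{E}_t[e^{-\gamma(\mathfrak{A}(\Pi_{t+1}(\mathfrak{f},\sigma))-\mathfrak{A}(\sigma))}]\leq 1$, and both then control the convex exponential by its chord --- your secant bound on $[-A,1-A]$ is, after multiplying through by $e^{-\gamma A}$, exactly the paper's inequality $\widetilde{G}_{\gamma}(z)=z-\frac{1-e^{-\gamma z}}{1-e^{-\gamma}}\leq 0$ on $[0,1]$ applied to $z=\mathfrak{A}(\Pi_{t+1}(\mathfrak{f},\sigma))$. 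The only genuine divergence is the final deterministic step in $A=\mathfrak{A}(\sigma)$: the paper verifies $A+\delta_t A(1-A)\geq\frac{1-e^{-\gamma A}}{1-e^{-\gamma}}$ by showing the function $G_{\gamma}(z)=z+\delta_t z(1-z)-\frac{1-e^{-\gamma z}}{1-e^{-\gamma}}$ is concave with $G_{\gamma}(0)=G_{\gamma}(1)=0$, which requires the second-derivative estimate $\gamma/(1-e^{-\gamma})<2$, whereas you substitute the weakened drift $\gamma A(1-A)$ and close with two applications of $1+z\leq e^z$. Your version is slightly more elementary (no calculus beyond convexity of the exponential), and your closing remark correctly identifies that the weakening $\delta_t\rightsquigarrow\gamma$ is where the hypothesis $\gamma\leq\delta_t$ bites --- precisely the same place the paper uses it, namely in the sign of $2(\gamma-\delta_t)$ in the concavity computation.
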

\begin{proof} We only need to show the statement for $\gamma>0$.
Thus, without loss of generality, assume that the event $\{\delta_{t}>0\}$ occurs.
Define the function $G_{\gamma}:[0,1]\rightarrow\R$ for all $\gamma\in(0,1\wedge\delta_{t}]$
by 
\begin{equation}
G_{\gamma}(z):=z+\delta_{t}z(1-z)-\frac{1-e^{-\gamma z}}{1-e^{-\gamma}}, \label{E G def}
\end{equation} 
and observe that $G_{\gamma}(0)=G_{\gamma}(1)=0$ and that
\[
\frac{\partial^{2}}{\partial z^{2}}G_{\gamma}(z)=-2\delta_{t}+\frac{\gamma^{2}}{1-e^{-\gamma}}\cdot e^{-\gamma z}\leq-2\delta_{t}+\frac{\gamma}{1-e^{-\gamma}}\cdot\gamma\leq2(\gamma-\delta_{t})\leq0,
\]
 since $\gamma/(1-e^{-\gamma})<2$ for all $\gamma\in(0,1\wedge\delta_{t}]$. This yields $G_{\gamma}(z)\geq0$.
Similarly, we see that 
\begin{align}
\widetilde{G}_{\gamma}(z):=z-\frac{1-e^{-\gamma z}}{1-e^{-\gamma}}\leq0\label{E tildeG ineq}
\end{align}
 for all $z\in[0,1]$ and $\gamma\in(0,1]$, which yields that 
\begin{align*}
\E_{t}\left[\frac{1-e^{-\gamma\Pi_{t+1}(p)}}{1-e^{-\gamma}}\right]\geq\E_{t}\left[\Pi_{t+1}(p)\right]\geq G_{\gamma}(p)+\frac{1-e^{-\gamma p}}{1-e^{-\gamma}}\geq\frac{1-e^{-\gamma p}}{1-e^{-\gamma}},
\end{align*}
 for all $p\in[0,1]$, where the first inequality follows from \eqref{E tildeG ineq} with
$z=\Pi_{t+1}(p)$ and the second inequality follows
from \eqref{E G def} and \eqref{so1}. This yields  
\begin{align*}
\mathbb{E}_{t}\left[e^{-\frac{\gamma}{\theta_{t}}\left(\Pi_{t+1}^{\theta}(p)-p\right)}\right]=\mathbb{E}_{t}\left[e^{-\gamma\left(\Pi_{t+1}(p)-p\right)}\right]\leq1,
\end{align*}
which proves the statement. 
\end{proof}

Now, we provide the proof of Theorem~\ref{lowerbound} by applying the previous lemma:

\begin{proof}[Proof of Theorem~\ref{lowerbound}] Fix the smallest integer
$\widetilde{\gamma}=\widetilde{\gamma}(P_{0},\varepsilon)$ such that
$e^{-\widetilde{\gamma}P_{0}}<\varepsilon.$ Define the compressing
sequence $\{\theta\}_{t\in\N_0}$ by $\theta_{t}:=(\delta_{t}\wedge1)/\widetilde{\gamma}\leq1$, observe that $\sum_{t=0}^{\infty}\theta_{t}\delta_{t}=\infty$, and define the
process $M=\{M_{t}\}_{t\in\N_{0}}$ by $M_{t}:=e^{-\widetilde{\gamma}P_{t}^{\theta}}$
for all $t\in\N_{0}$. We start by observing that $M$ is a supermartingale since, for fixed
$t\in\N_{0}$, we have  
\begin{align*}
\E_{t}\left[M_{t+1}\right] & =\mathbb{E}_{t}\left[e^{-\widetilde{\gamma}P_{t+1}^{ \theta}}\right]=\mathbb{E}_{t}\left[e^{-\frac{(\delta_{t}\wedge1) }{\theta_{t} }P_{t+1}^{ \theta}}\right]\leq e^{-\frac{(\delta_{t}\wedge1) }{\theta_{t} }P_{t}^{ \theta}}=M_{t},
\end{align*}
 where the inequality follows from Lemma~\ref{L_exp_inequality}.
Thus, $M_{t}$ converges to some random variable $M_{\infty}\in[0,1]$
and we obtain 
\begin{align*}
\mathbb{P}\left(P_{\infty}^{ \theta}=1\right) & =\mathbb{E}[P_{\infty}^{ \theta}]\geq1-\E[M_{\infty}]\geq1-M_{0}\geq1-\varepsilon,
\end{align*}
 where we have used \eqref{eq:slowberhr} and Lemma~\ref{support} in
the first equality.  \end{proof}

The proof of Theorem~\ref{T_RE_Theory} is similar:

\begin{proof}[Proof of Theorem~\ref{T_RE_Theory}] Fix $\varepsilon\in(0,1)$
and observe that there exists a constant $\delta\in(0,1)$ such that
the event $A:=\{\widetilde{\delta}\geq\delta\}\subset\{\inf_{t\in\N_{0}}\{\delta_{t}/\theta_{t}\}\geq\delta\}$
satisfies $\Prob(A)\geq1-\varepsilon/2$. Define $y:=-\log(\varepsilon/2)/\delta>0$
and the process $M=\{M_{t}\}_{t\in\N_{0}}$ by 
\begin{align*}
M_{t}:=e^{-\frac{\delta}{\theta_{t\wedge\rho}}P_{t}}\1_{\{\min_{u\in\{0,\ldots,t\}}\{\delta_{u}/\theta_{u}\}\geq\delta\}}
\end{align*}
 for all $t\in\N_{0}$, where the stopping time $\rho$ is given in
\eqref{E prop1 rho}. As in the proof of Theorem~\ref{lowerbound}
we start by showing that $M$ is a supermartingale. Towards this end,
fix some $t\in\N_{0}$ and assume, without loss of generality, that
we are on the event $\{\min_{u\in\{0,\ldots,t\}}\{\delta_{u}/\theta_{u}\}\geq\delta\}$.
Define now  $\widetilde{P}_{t}=P_{t}$ and $\widehat{P}_{t+1}=\widetilde{P}_{t}+(P_{t+1}-P_{t})/\theta_{t}\in[0,1]$ and
observe that $\E_t[\widehat{P}_{t+1}]-\widetilde{P}_{t}\geq\delta\widetilde{P}_{t}(1-\widetilde{P}_{t})$. We then obtain that 
\begin{align*}
\E_{t}\left[M_{t+1}\right] & \leq\mathbb{E}_{t}\left[e^{-\frac{\delta}{\theta_{(t+1)\wedge\rho}}P_{t+1}}\right]\leq\mathbb{E}_{t}\left[e^{-\frac{\delta}{\theta_{t\wedge\rho}}P_{t+1}}\right]=\mathbb{E}_{t}\left[e^{-\frac{\delta\theta_{t}/\theta_{t\wedge\rho}}{\theta_{t}}P_{t+1}}\right]\leq e^{-\frac{\delta}{\theta_{t\wedge\rho}}P_{t}}=M_{t},
\end{align*}
 where we have used Lemma~\ref{L_exp_inequality}, in which we interpret
$(P_{t},P_{t+1})$ as the ``compressed version" of $(\widetilde{P}_{t},\widehat{P}_{t+1})$. Thus, as in the proof of Theorem~\ref{lowerbound}, $M_{t}$ converges to some random variable $M_{\infty}\in[0,1]$.

As $\rho<\infty$ almost surely by assumption, we obtain from an argument similar to the one in Lemma~\ref{support} that 
\begin{align*}
\mathbb{P}\left(P_{\infty}=1\right) & =\mathbb{E}[P_{\infty}]\geq1-\E\left[e^{-\frac{\delta}{\theta{_{\rho}}}P_{\infty}}\right]\geq1-\E[M_{\infty}]-\Prob(A^{C})\geq\Prob(A)-\E[M_{\rho}]\\
 & \geq\Prob(A)-\E\left[e^{-\frac{\delta}{\theta_{\rho}}P_{\rho}^{\theta}}\right]\geq1-\frac{\varepsilon}{2}-e^{-\delta y}=1-\varepsilon,
\end{align*}
 similarly to the proof of Theorem~\ref{lowerbound}, where $A^{C}:=\Omega\setminus A$.
As $\varepsilon$ was chosen arbitrarily, we obtain the statement. \end{proof}

For the proof of Corollary~\ref{C as}, we make the following useful observation:

\begin{lemma} \label{L pmin} For an updating rule $\Pi$
with corresponding performance sequence $\{P_{t}\}_{t\in\N_{0}}$,
we have $P_{t}^{\theta}\geq P_{0}/(t+1)$ for all $t\in\N_{0}$ for
the compressing sequence $\theta$ defined by $\theta_{t}=1/(t+2)$. \end{lemma}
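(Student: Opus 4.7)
The plan is to prove the bound by a direct induction on $t$, exploiting the recursive structure of the slow updating rule together with the linearity of $\mathfrak{A}$ and the fact that $\mathfrak{A}$ takes values in $[0,1]$. The WBERHR hypothesis and the probabilistic submartingale structure are not needed here: the inequality is pathwise and follows from the worst case where the fictitious original update would send the performance to zero.

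Concretely, first I would use \eqref{eq:slowdef} together with the linearity of $\mathfrak{A}$ to rewrite
\begin{align*}
P_t^\theta \;=\; \mathfrak{A}(\sigma_{t-1}^\theta) + \theta_{t-1}\bigl(\mathfrak{A}(\Pi_t(\mathfrak{f}_{t-1}^\theta,\sigma_{t-1}^\theta)) - \mathfrak{A}(\sigma_{t-1}^\theta)\bigr) \;=\; (1-\theta_{t-1})P_{t-1}^\theta + \theta_{t-1}\mathfrak{A}(\Pi_t(\mathfrak{f}_{t-1}^\theta,\sigma_{t-1}^\theta)).
\end{align*}
Since $\mathfrak{A}$ maps into $[0,1]$, the last term is non-negative, so $P_t^\theta \geq (1-\theta_{t-1}) P_{t-1}^\theta$. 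With the specified slowing sequence $\theta_{t-1} = 1/(t+1)$, this yields the one-step recursion $P_t^\theta \geq \tfrac{t}{t+1} P_{t-1}^\theta$.

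The induction then unwinds immediately as a telescoping product:
\begin{align*}
P_t^\theta \;\geq\; \prod_{s=0}^{t-1}(1-\theta_s)\,P_0 \;=\; \prod_{s=0}^{t-1}\frac{s+1}{s+2}\,P_0 \;=\; \frac{P_0}{t+1},
\end{align*}
with the base case $P_0^\theta = P_0$ trivially satisfying the bound. There is no real obstacle: the only thing to be careful about is the indexing convention between $\theta_{t-1}$ in the update and the target bound $1/(t+1)$, which matches exactly because $\theta_{t-1} = 1/(t+1)$.
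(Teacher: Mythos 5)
Your proof is correct and follows essentially the same route as the paper's: both arguments are a one-step induction that drops the non-negative term $\theta_{t-1}\mathfrak{A}(\Pi_t(\cdot))$ to get $P_t^\theta \geq (1-\theta_{t-1})P_{t-1}^\theta$ and then telescope $\prod(1-\theta_s)=1/(t+1)$. The indexing is handled correctly, so nothing further is needed.
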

\begin{proof} Assume that we have shown $P_{t}^{\theta}\geq P_{0}/(t+1)$
for some fixed $t\in\N_{0}$. Then 
\begin{align*}
P_{t+1}^{\theta}=P_{t}^{\theta}+\frac{1}{t+2}\left(\Pi_{t+1}\left(P_t^{\theta}\right)-P_{t}^{\theta}\right)\geq P_{t}^{\theta}-\frac{P_{t}^{\theta}}{t+2}=\frac{(t+1)P_{t}^{\theta}}{t+2}\geq\frac{P_{0}}{t+2},
\end{align*}
 and the statement follows by induction. \end{proof}

Simple computations then yield the following core conclusion:
\begin{lemma} \label{L p_bound} Let the updating rule $\Pi$
satisfy WBERHR with performance sequence $\{P_{t}\}_{t\in\N_{0}}$
and $P_{0}>0$ and let $\theta$ denote the compressing sequence defined
by $\theta_{t}:=1/(t+2)$ for all $t\in\N_{0}$. Fix $y>0$ and define
the stopping time $\rho$ as 
\begin{align}
\rho:=\min\left\{ t\in\mathbb{N}_{0}:P_{t}^{\theta}\geq\frac{y}{t+2}\right\} \text{ with }\min\emptyset:=\infty.\label{E tau2}
\end{align}
 Then $\Prob(\rho<\infty)=1$. \end{lemma}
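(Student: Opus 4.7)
The plan is to exploit that, for the specific slowing sequence $\theta_t = 1/(t+2)$, the dynamics of $P^{\theta}$ telescope into a running sum. Define the $\CF_{[0,t+1]}$-measurable random variables $Y_{t+1} := \mathfrak{A}\bigl(\Pi_{t+1}(\mathfrak{f}_{t}^{\theta},\sigma_{t}^{\theta})\bigr) \in [0,1]$ for $t\in\N_{0}$. Using linearity of $\mathfrak{A}$ and the definition of $\Pi^{\theta}$, one obtains the recursion
\[
(t+2)P_{t+1}^{\theta} = (t+1)P_{t}^{\theta} + Y_{t+1},
\]
which telescopes to $(t+1)P_{t}^{\theta} = P_{0} + \sum_{s=1}^{t}Y_{s}$ for all $t\in\N_{0}$. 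In particular, since $Y_{s}\geq 0$, the sequence $\{(t+1)P_{t}^{\theta}\}_{t\in\N_{0}}$ is non-decreasing and converges to a limit $L \in [P_{0},\infty]$. Since $(t+2)P_{t}^{\theta}\geq (t+1)P_{t}^{\theta}$, it suffices to prove that $L=\infty$ almost surely; once this is established, for any $y>0$ the event $\{(t+2)P_{t}^{\theta}\geq y\}$ occurs at some finite time, so $\rho<\infty$ almost surely.

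To show $L=\infty$ almost surely, I would first use WBERHR applied at the configuration $\sigma_{t}^{\theta}$ to deduce $\E_{t}[Y_{t+1}] \geq P_{t}^{\theta} + \delta_{t}P_{t}^{\theta}(1-P_{t}^{\theta}) \geq P_{t}^{\theta}$, and then invoke Lemma~\ref{L pmin} to obtain the deterministic lower bound $\E_{t}[Y_{t+1}]\geq P_{0}/(t+1)$. Summing gives $\sum_{s=1}^{\infty}\E_{s-1}[Y_{s}]\geq P_{0}\sum_{s=1}^{\infty}1/s = \infty$ almost surely.

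It remains to upgrade the divergence of the conditional means to the divergence of the sum itself. For this I would introduce the martingale $M_{t} := \sum_{s=1}^{t}(Y_{s}-\E_{s-1}[Y_{s}])$ whose increments are bounded in absolute value by $1$. The standard dichotomy for martingales with bounded increments states that, almost surely, either $M_{t}$ converges to a finite limit or $\limsup_{t}M_{t} = +\infty$ and $\liminf_{t}M_{t} = -\infty$. Consequently, $M_{t}$ cannot diverge to $-\infty$, which rules out the event $\{\sum_{s=1}^{\infty}Y_{s}<\infty\}\cap\{\sum_{s=1}^{\infty}\E_{s-1}[Y_{s}]=\infty\}$. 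Combined with the preceding step, this yields $\sum_{s=1}^{\infty}Y_{s}=\infty$ almost surely, hence $L=\infty$ almost surely, completing the proof.

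The substantive step is the martingale dichotomy argument converting the almost-sure divergence of the conditional means into almost-sure divergence of the sum; everything else is a direct algebraic unwinding of the telescoping recursion together with the pointwise bound from Lemma~\ref{L pmin}.
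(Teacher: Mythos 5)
Your proof is correct, but it takes a genuinely different route from the paper's. You exploit the algebraic identity specific to $\theta_{t}=1/(t+2)$: writing $Y_{t+1}:=\mathfrak{A}\bigl(\Pi_{t+1}(\mathfrak{f}_{t}^{\theta},\sigma_{t}^{\theta})\bigr)$, linearity of $\mathfrak{A}$ gives $(t+2)P_{t+1}^{\theta}=(t+1)P_{t}^{\theta}+Y_{t+1}$, hence $(t+1)P_{t}^{\theta}=P_{0}+\sum_{s=1}^{t}Y_{s}$, and the lemma reduces to showing $\sum_{s}Y_{s}=\infty$ almost surely. That you get from $\E_{s-1}[Y_{s}]\geq P_{s-1}^{\theta}\geq P_{0}/s$ (WBERHR plus Lemma~\ref{L pmin}, the latter in fact being an immediate corollary of your own telescoped identity) together with the standard dichotomy for martingales with bounded increments, i.e.\ the conditional Borel--Cantelli principle that for bounded nonnegative adapted $Y_{s}$ the events $\{\sum Y_{s}=\infty\}$ and $\{\sum\E_{s-1}[Y_{s}]=\infty\}$ coincide almost surely. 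The paper instead avoids any such external theorem: it proves a geometric decay $\Prob(\rho>N_{k+1})\leq c\,\Prob(\rho>N_{k})$ along the doubly exponential subsequence $N_{k}=N^{2k}-1$, by bounding $\E[P_{N_{k}}^{\theta}\1_{\{\rho>N_{k}\}}]$ from below via Lemma~\ref{L pmin} and from above via optional stopping for the submartingale $P^{\theta}$ and the increment bound $P_{t+1}^{\theta}\leq P_{t}^{\theta}+\theta_{t}$. Your argument is shorter and more transparent, and makes visible why the $1/(t+2)$ slowing sequence is special; the paper's argument is more self-contained and yields a quantitative tail estimate for $\rho$ as a by-product. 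Both establish the claim under the same hypotheses.
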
 \begin{proof} We shall
show the statement with $y$ in \eqref{E tau2} replaced by $NP_{0}$,
where $N=[y/P_{0}]+1$ with $[\cdot]$ here denoting the largest integer
smaller than the argument. Define $N_{k}:=N^{2k}-1$ for all $k\in\N_{0}$.
Since $\Prob(\rho>t)$ is non-increasing in $t$, it is sufficient
to show that $\Prob(\rho>N_{k})\downarrow 0$ as $k\uparrow \infty$.
This follows if there exists some constant $c\in[0,1)$ independent
of $k$ such that 
\begin{align}
p_{1}:=\Prob(\rho>N_{k+1})\leq c\Prob(\rho>N_{k})=:cp_{0}\label{E L_p1_p0}
\end{align}
 for all $k\in\N$. Towards this end, Lemma~\ref{L pmin} and the
submartingale property of $\left\{ P_{t}\right\} _{t\in\N_{0}}$ yield 
\begin{align*}
\frac{P_{0}}{N^{2k}}p_{0} & \leq\E[P_{N_{k}}^{\theta}\1_{\{\rho>N_{k}\}}]\leq\E\left[\E_{N_{k}}[P_{N_{k+1}\wedge\rho}^{\theta}]\1_{\{\rho>N_{k}\}}\right]=\E\left[P_{N_{k+1}}^{\theta}\1_{\{\rho>N_{k+1}\}}\right]+\E\left[P_{\rho}^{\theta}\1_{\{N_{k}<\rho\leq N_{k+1}\}}\right]\\
 & \leq\frac{NP_{0}}{N^{2(k+1)}+1}p_{1}+\left(\frac{NP_{0}}{N^{2k}+1}+\frac{1}{N^{2k}+1}\right)(p_{0}-p_{1})\leq\frac{P_{0}}{N^{2k}}\left(\frac{p_{1}}{N}+\left(N+\frac{1}{P_{0}}\right)(p_{0}-p_{1})\right)
\end{align*}
 since $P_{t+1}^{\theta}\leq P_{t}^{\theta}+\theta_{t}$ and $p_{0}\geq p_{1}$.
 Sorting terms, we have 
\begin{align*}
p_{1}\left(N+\frac{1}{P_{0}}-\frac{1}{N}\right) & \leq p_{0}\left(N+\frac{1}{P_{0}}-1\right)
\end{align*}
 and, thus, \eqref{E L_p1_p0}. \end{proof}

\citet{Lamberton_etal2004} prove a generalized version of Corollary~\ref{C as} for the special case of the two-armed bandit algorithm in their Theorem~1(c) and Corollary~2. Their argument can be summarized in three steps. First, they establish a version of Lemma~\ref{support} above. Then, via a coupling argument, they relate the   
two-armed bandit algorithm to another one where both arms have the same distribution. Third, they use martingale methods to prove that the later algorithm does not get trapped in zero.  Putting the three steps together they   obtain a version of Corollary~\ref{C as} above.

In contrast, the proofs provided here rely deeply on the bounds on the relative hazard rates.  In particular, Lemma~\ref{L_exp_inequality}  is related to the fact that a concave function of a submartingale is still a submartingale, as long as the original submartingale only moves in small steps, and is ``sufficiently drifted,'' which the BERHR condition guarantees.  Lemma~\ref{L p_bound}, which is used in the proof of Corollary~\ref{C as}, can be interpreted as the statement that the performance sequence does not get trapped in zero, similar to the third step in \citet{Lamberton_etal2004}.

\setlength{\bibsep}{1pt} {\small \bibliographystyle{apalike}
\bibliography{CO_JR_2013}

\end{document}